\definecolor{webcolor}{rgb}{0.8,0,0.2}
\definecolor{webbrown}{rgb}{.6,0,0}
\numberwithin{equation}{section}
\newcommand{\QQ}{\mathbb Q}
\newcommand{\RR}{\mathbb R}
\newcommand{\ZZ}{\mathbb Z}
\newcommand{\calJ}{\mathcal J}
\newcommand{\calA}{\mathcal A}
\newcommand{\calN}{\mathcal N}
\def\Sel{\operatorname{Sel}} 
\def\Gal{\operatorname{Gal}}
\def\ord{\operatorname{ord}}
\newcommand{\legendre}[2]{\genfrac{(}{)}{}{}{#1}{#2}}
\newcommand{\defi}[1]{\textsf{#1}} 
\newcommand\blank[1]{}
\def\bbar#1{\setbox0=\hbox{$#1$}\dimen0=.2\ht0 \kern\dimen0 
\overline{\kern-\dimen0 #1}}
\newcommand{\Qbar}{{\overline{\mathbb Q}}}
\newtheorem{thm}{Theorem}[section]
\newtheorem{lemma}[thm]{Lemma}
\newtheorem{conj}[thm]{Conjecture}
\theoremstyle{definition}
\theoremstyle{remark}
\newtheorem{remark}[thm]{Remark}
\newenvironment{romanenum}{\hfill \begin{enumerate} }{\end{enumerate}}
\begin{document}

\title{An elliptic surface with infinitely many fibers for which the rank does not jump}
\subjclass[2020]{Primary 11G18; Secondary 14J27}

\author{David Zywina}
\address{Department of Mathematics, Cornell University, Ithaca, NY 14853, USA}
\email{zywina@math.cornell.edu}

\begin{abstract}
Let $E$ be a nonisotrivial elliptic curve over $\QQ(T)$ and denote the rank of the abelian group $E(\QQ(T))$ by $r$.   For all but finitely many $t\in \QQ$, specialization will give an elliptic curve $E_t$ over $\QQ$ for which the abelian group $E_t(\QQ)$ has rank at least $r$.   Conjecturally, the set of $t\in \QQ$ for which $E_t(\QQ)$ has rank exactly $r$ has positive density.   We  produce the first known example for which $E_t(\QQ)$ has rank $r$ for infinitely many $t\in \QQ$.     For our particular $E/\QQ(T)$ which has rank $0$, we will make use of a theorem of Green on $3$-term arithmetic progressions in the primes to produce $t\in \QQ$ for which $E_t$ has only a few bad primes that we understand well enough to perform a $2$-descent.

\end{abstract}

\maketitle

\section{Introduction}

Let $E$ be an elliptic curve over the function field $\QQ(T)$ that is nonisotrivial, i.e., its $j$-invariant does not lie in $\QQ$.   Fix a Weierstrass model of $E$ with coefficients in $\QQ[T]$ and denote its discriminant by $\Delta$. For all $t\in \QQ$ with $\Delta(t)\neq 0$, evaluating the coefficients of the model by $t$ gives an elliptic curve $E_t$ over $\QQ$.

The group $E(\QQ(T))$ is a finitely generated abelian group whose rank we will denote by $r$.   A theorem of Silverman \cite{MR703488} says that the group $E_t(\QQ)$ has rank \emph{at least} $r$ for all but finitely many $t\in \QQ$.   Let $\calN(E)$ and $\calJ(E)$ be the set of $t\in \QQ$ with $\Delta(t)\neq 0$ for which $E_t(\QQ)$ has rank equal to $r$ and rank strictly greater than $r$, respectively.   

Conjecturally the sets $\calN(E)$ and $\calJ(E)$ both have positive density in $\QQ$ with respect to the natural height, cf.~\cite[\S4]{MR4640089} for a heuristic.   There has been much study on the set $\calJ(E)$ which describes the $E_t$ for which their rank ``jumps'', cf.~\cite{MR3007150} and the references therein.  We will instead focus on the set $\calN(E)$ and the following weaker conjecture.

\begin{conj} \label{C:main}
The set $\calN(E)$ is infinite, i.e., there are infinitely many $t\in \QQ$ for which $E_t(\QQ)$ has rank $r$.
\end{conj}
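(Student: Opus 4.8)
Conjecture~\ref{C:main} is open for a general nonisotrivial $E/\QQ(T)$, so what follows is a strategy together with a precise account of where it stalls. The reduction is immediate from Silverman's theorem: for all but finitely many $t\in\QQ$ the specialization $E(\QQ(T))\to E_t(\QQ)$ is injective, so $E_t(\QQ)$ has rank at least $r$, and it suffices to produce infinitely many $t$ for which the rank is \emph{at most} $r$ --- for such $t$ outside the finite exceptional set the rank is exactly $r$, so $t\in\calN(E)$. The only general device for bounding the rank of an elliptic curve over $\QQ$ from above is a descent: one fixes an isogeny $\varphi$ on $E_t$ --- multiplication by $2$, or a $2$-isogeny when $E_t$ has rational $2$-torsion --- and controls the rank through the exact sequence relating the Mordell--Weil groups of the source and target, the Selmer groups $\Sel_\varphi$ and $\Sel_{\hat\varphi}$, and the torsion. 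So the plan is to find infinitely many $t$ for which these Selmer groups are no larger than the dimension forced by the known $r$ points and by torsion.

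The size of $\Sel_\varphi(E_t)$ is governed by local conditions at the primes of bad reduction of $E_t$ and at the primes dividing $\deg\varphi$. Fixing a Weierstrass model of $E$ over $\ZZ[T]$ with discriminant $\Delta\in\ZZ[T]$, the bad primes of $E_t$ are --- apart from a fixed finite set depending only on $E/\QQ(T)$ --- exactly the primes dividing $\Delta(t)$. Hence the descent can be made explicit and uniform over an infinite family of $t$ only if one can arrange, for each $t$ in the family, that $\Delta(t)$ is supported (up to bounded factors and perfect powers) on a short prescribed list of primes --- say its squarefree part is prime, or a product of two primes in a fixed ratio --- and that the resulting bad fibers have Kodaira types for which the local images of the descent map can be written down by hand. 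Under conditions like these the global Selmer group collapses to its expected dimension and the bound ``rank $\le r$'' follows.

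The main obstacle is that this last requirement is a statement about prime values of the polynomial $\Delta$, and for a general $E$ it is beyond reach: as soon as $\deg\Delta$ is large --- which is forced once $r$ is large, and already occurs in many small examples --- demanding that $\Delta(t)$ have a prescribed prime factorization is a prime-values-of-polynomials problem that is open in general, already being one of Landau's problems when $\Delta$ is an irreducible quadratic, with no unconditional substitute available. So Conjecture~\ref{C:main} appears inaccessible for arbitrary $E$ with present technology. What \emph{is} achievable, and what the present paper carries out, is to prove the conjecture for the first explicit $E$: one chooses an $E$ with $r=0$ whose model has discriminant factoring suitably into linear forms, invokes Green's theorem on three-term arithmetic progressions in the primes to manufacture an infinite family of $t$ for which $\Delta(t)$ has exactly the controlled prime structure above, and verifies that for each such $t$ the bad reduction of $E_t$ is tame enough that a classical $2$-descent yields $E_t(\QQ)$ of rank $0$. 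The additive-combinatorial input is supplying, in this special case, precisely what for a general $E$ we do not know how to produce.
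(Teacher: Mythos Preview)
The statement is a \emph{conjecture}, and the paper does not prove it in general; it establishes only the single instance recorded as Theorem~\ref{T:main}. Your proposal correctly recognizes this, and your account of the strategy --- reduce via Silverman to bounding the rank from above, do this by a $2$-descent whose Selmer groups are controlled by the bad primes of $E_t$, and note that arranging $\Delta(t)$ to have prescribed prime factorization is in general a prime-values-of-polynomials problem out of reach --- matches the paper's own framing and the method of \S\ref{S:rank 0}--\S\ref{S:proof}. Your description of how the special case is salvaged (discriminant factoring into linear forms, Green's theorem supplying the arithmetic progressions of primes, explicit $2$-isogeny descent) is exactly what the paper does. There is nothing to correct.
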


Our main result gives the first unconditional example for which Conjecture~\ref{C:main} holds.

\begin{thm} \label{T:main}
Let $E/\QQ(T)$ be the elliptic curve defined by the equation $y^2=x(x^2-x+T)$.  The group $E(\QQ(T))$ has rank $0$ and $E_t(\QQ)$ has rank $0$ for infinitely many $t\in \QQ$.
\end{thm}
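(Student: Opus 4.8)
The plan is to split the statement into two independent parts: (a) the group $E(\QQ(T))$ has rank $0$, and (b) there are infinitely many $t\in\QQ$ with $\rank E_t(\QQ)=0$. Part (a) is a finite computation. The curve $y^2=x(x^2-x+T)$ is a rational elliptic surface (the coefficients are linear in $T$), so by the Shioda--Tate formula the rank of $E(\QQ(T))$ is $8$ minus the sum of the local contributions $m_v-1$ over the bad fibers, together with the torsion/lattice data; I would locate the bad fibers by factoring the discriminant $\Delta(T)$ as a polynomial in $T$, read off the Kodaira types of the fibers (including at $T=\infty$), and check that the local contributions already account for rank $8$, forcing the generic rank to be $0$. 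Equivalently, one can verify $L$-function information or simply exhibit that the Mordell--Weil lattice is trivial. Since $(0,0)$ is a $2$-torsion point, one must also check there are no nontorsion sections; the surface being rational makes this routine.

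For part (b), the strategy is to produce, via Green's theorem on $3$-term arithmetic progressions in the primes, rational numbers $t$ for which $E_t/\QQ$ has a very restricted set of bad primes, controlled enough that a full $2$-descent provably yields rank $0$. Concretely: the discriminant of $E_t\colon y^2=x(x^2-x+t)$ and the factorization $x^2-x+t$ govern the bad reduction; writing $t$ in a suitable parametrized family (e.g. choosing $t$ so that $x^2-x+t$ has roots in terms of a prime, or so that $\Delta(t)$ is supported on a controlled set of primes), I would arrange that $E_t$ has bad reduction only at $2$ and at primes $p_1,p_2,p_3$ lying in a $3$-term arithmetic progression, with prescribed congruence/splitting behavior. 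Green's theorem then supplies infinitely many such progressions, hence infinitely many such $t$. For each, the $2$-isogeny descent (using the $2$-torsion point $(0,0)$ and the dual isogeny) expresses $\rank E_t(\QQ)$ in terms of the images of two Selmer-type groups inside $\QQ^\times/(\QQ^\times)^2$ supported on the bad primes; the number-of-generators bound, combined with local conditions at $2$ and at $p_1,p_2,p_3$ forced by the arithmetic-progression congruences, kills every candidate class, so both descent quotients are trivial and $\rank E_t(\QQ)=0$.

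The main obstacle is the second step: arranging simultaneously that (i) the bad primes of $E_t$ are few and of a shape produced by Green's theorem, and (ii) at those primes the local solubility conditions in the $2$-descent all fail for nontrivial classes. These two requirements pull in opposite directions — making the reduction data clean enough for an explicit descent while keeping the defining condition loose enough to be satisfied by an infinite family of prime triples — and reconciling them is where the real work lies. I would first carry out the $2$-descent symbolically for a general $t$ in the chosen family to see exactly which congruence conditions on the $p_i$ (mod small powers of $2$, and via quadratic-residue symbols $\legendre{p_i}{p_j}$) force the Selmer groups to collapse, then check that those conditions cut out a nonempty union of residue classes to which Green's theorem applies. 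A secondary point to handle carefully is torsion and the Tamagawa/parity bookkeeping, to be sure that "Selmer rank $0$" really gives "Mordell--Weil rank $0$" and not merely an upper bound obstructed by $\Sha$; here the explicitness of the bad primes should let one exhibit the relevant local points (or their absence) directly.
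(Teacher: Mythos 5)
Your overall strategy for the second assertion is the same as the paper's: use Green's theorem to produce $t$ whose bad primes are $2$ together with a $3$-term progression of primes in prescribed residue classes, then run a descent via the $2$-isogeny with kernel $\langle(0,0)\rangle$. But the decisive content --- the explicit family and the congruence conditions --- is exactly what you defer to ``where the real work lies,'' so as written the argument has a genuine gap. The paper's choice is $t=(m+n)/(2m)$ with $m$, $m+n$, $m+2n$ all primes congruent to $3\pmod 8$; after clearing denominators $E_t$ becomes $y^2=x(x^2-4m^2x+8m^3(m+n))$ with discriminant $-2^{14}m^9(m+n)^2(m+2n)$, so the bad primes are exactly $2,m,m+n,m+2n$, and the condition $3\pmod 8$ is what makes the local solubility analysis at $2$ and at $m+n$ eliminate all classes in $\Sel_\phi$ except $1$ and $-m(m+2n)$. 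Note also that neither Selmer group ends up trivial: each has order $2$ (generated by the image of the relevant $2$-torsion point), and the rank bound comes from $|\Sel_2(E/\QQ)|\le 2$ together with $|E(\QQ)/2E(\QQ)|\ge 2^{1+r}$; there is no residual $\Sha$ worry because the Selmer group is an upper bound and the torsion already saturates it. A further difference: the paper does not compute $\Sel_{\hat\phi}(E'/\QQ)$ by a second local-solubility analysis, but instead gets it from Cassels's formula relating $|\Sel_{\hat\phi}|/|\Sel_\phi|$ to Tamagawa numbers and the period ratio, pinning down the period ratio via a root-number computation; your plan to kill both descent groups by direct local conditions is plausible but would need to be carried out.

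For the first assertion, your Shioda--Tate computation on the rational elliptic surface is a genuinely different (and heavier) route. The paper gets the generic rank for free: Silverman's specialization theorem gives $\rank E_t(\QQ)\ge r$ for all but finitely many $t$, so once infinitely many fibers of rank $0$ are exhibited, $r=0$ follows with no surface geometry at all. Your approach would work but requires verifying minimality of the Weierstrass model over $\QQ[T]$, the fiber types including at $T=\infty$, and descending from the geometric Mordell--Weil group to $E(\QQ(T))$; none of that is needed.
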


Take $E/\QQ(T)$ as in Theorem~\ref{T:main}.  The goal is to find specializations $E_t/\QQ$ for which the curve has few bad primes and for which they are all explicitly understood.  In order to bound the rank of $E_t(\QQ)$, we will bound the cardinality of its $2$-Selmer group and this will depend on the knowledge of these bad primes.

Let us describe the specializations we use in our proof of Theorem~\ref{T:main}.   Take any positive integers $m$ and $n$ for which $m$, $m+n$ and $m+2n$ are all primes that are congruent to $3$ modulo $8$.  With $t:=\tfrac{m+n}{2m} \in \QQ$,  we shall prove that $E_t(\QQ)$ has rank $0$.   The elliptic curve $E_t$ has good reduction away from the primes $2$, $m$, $m+n$ and $m+2n$. 

A theorem of Green \cite{MR2180408}, later generalized by Green and Tao \cite{MR2415379}, will be used to show that there are infinitely many such arithmetic progressions of primes; this is the source of the infiniteness in Theorem~\ref{T:main}.   Alternatively, this could be proved with a minor modification of the classical circle method argument that van der Corput used  in 1939 to prove that there are infinitely many $3$-term arithmetic progressions of primes.

For our elliptic curve $E$, it is easy to show that the set $\calJ(E)$ is also infinite.   Indeed, using Silverman's result one can prove that $(1,b)$ is a point of infinite order on $E_{b^2}$ for all but finitely many $b\in \QQ$.

In a followup paper \cite{Zyw25}, we will give another example of Conjecture~\ref{C:main} with $r=2$.  

\subsection{Some earlier conditional results} \label{SS:earlier}

Let $E/\QQ(T)$ be the elliptic curve given by $y^2=x(x+1)(x+T)$.  Caro and Pasten \cite{MR4640089} showed that $E$ satisfies Conjecture~\ref{C:main} if there are infinitely many Mersenne primes.  Moreover, given any Mersenne prime $p=2^q-1$ with $q\geq 5$, they show that $E_{2^q}(\QQ)$ has rank $0$.  Note that such an elliptic curve $E_{2^q}$ has good reduction away from $2$ and $p$.    The existence of infinitely many Mersenne primes is of course a famous open problem.

Let $E/\QQ(T)$ be the elliptic curve given by $y^2=x^3-(T+1)/4\cdot x^2-x$.   If $p$ is a prime of the form $t^2+64$ for an integer $t$, then one can show that $E_t(\QQ)$ has rank $0$.     These elliptic curves have been studied in \cite{MR337999,MR371904,MR2052021}.   Note that such an elliptic curve $E_{t}$ has good reduction away from $p$.   The existence of infinitely many primes of the form $t^2+64$ with $t\in \ZZ$ is an open problem (a special case of the Bunyakovsky conjecture).

\subsection{Aside: the isotrivial case}

For Conjecture~\ref{C:main}, it is important that $E$ is assumed to be nonisotrivial and not just nonconstant.  Consider the \emph{isotrivial} elliptic curve $E/\QQ(T)$ defined by $y^2=x(x^2-(7+7T^4))^2$.  Cassels and Schinzel \cite{MR663485} observed that $E(\QQ(T))$ has rank $0$ and expected that $E_t(\QQ)$ has rank at least $1$ for all $t\in \QQ$.   Indeed, the root number of each $E_t$ is $-1$ and hence the rank of $E_t(\QQ)$ should be odd by the parity conjecture.

It is straightforward to find isotrivial and nonconstant examples for which the conclusion of Conjecture~\ref{C:main} holds.   Consider the elliptic curve $E/\QQ(T)$ defined by the equation $y^2=x^3+Tx$.  Then $E_p(\QQ)$ has rank $0$ for all primes $p$ that are congruent to $7$ or $11$ modulo $16$, cf.~\cite[Proposition~6.2]{Silverman}.

What makes the nonisotrivial case more difficult is that it is harder to produce $t\in \QQ$ for which $E_t$ has bad reduction at only a few primes which are easy to describe.  This is clear from our example and the earlier conditional examples in \S\ref{SS:earlier}.

\section{Main computation} \label{S:rank 0}

Consider any positive integers $m$ and $n$ for which $m$, $m+n$ and $m+2n$ are all primes that are congruent to $3$ modulo $8$.   Set $a:=-4m^2$ and $b:=8m^3(m+n)$, and define the elliptic curve $E$ over $\QQ$ by
\begin{align} \label{E:new uv model}
y^2=x(x^2+a x + b ) = x(x^2-4m^2 x+8m^3(m+n)). 
\end{align}
In this section we shall prove that $E(\QQ)$ has rank $0$. 

\begin{remark}
Set $t:=(m+n)/(2m) \in \QQ$.  In our proof of Theorem~\ref{T:main} in \S\ref{S:proof}, we will see that this curve is isomorphic to the elliptic curve $E_t/\QQ$ with notation as in Theorem~\ref{T:main}.
\end{remark}

Set $a':=-2a=8m^2$ and $b':=a^2-4b=-16 m^3(m+2n)$, and define the elliptic curve $E'$ over $\QQ$ by 
\begin{align} \label{E:new uv model 2}
y^2=x(x^2+a'x+b')=x(x^2+8m^2x-16m^3(m+2n)).
\end{align}
There is an isogeny $\phi\colon E\to E'$ given by $\phi(x,y)=(y^2/x^2, y(b-x^2)/x^2)$ whose kernel $E[\phi]$ is cyclic of order $2$ and generated by $(0,0)$.   Let $\hat\phi\colon E'\to E$ be the dual isogeny of $\phi$; its kernel $E'[\hat\phi]$ is generated by the $2$-torsion point $(0,0)$ of $E'$.

The discriminant of the Weierstrass models (\ref{E:new uv model}) is $-2^{14} m^9 (m+n)^2 (m+2n)$.  Therefore, $E$ and $E'$ both have good reduction at all primes away from the set $\{2,m,m+n,m+2n\}$.

 For each prime $p$, we let $c_p(E)$ and $c_p(E')$ be the Tamagawa number of $E$ and $E'$, respectively, at $p$.  For each prime $p$, we will denote by $\ord_p$ the discrete valuation on $\QQ_p$ with valuation ring $\ZZ_p$ normalized so that $\ord_p(p)=1$.
 
 Let $W(E)$ be the global root number of $E/\QQ$.   We will now show that $W(E)=1$; the Birch and Swinnerton--Dyer conjecture would imply that this is a necessary condition for $E(\QQ)$ to have rank $0$. 

\begin{lemma}
\label{L:Tate's algorithm 1}
\begin{romanenum}
\item \label{L:Tate's algorithm 1 i}
We have $W(E)=1$.
\item \label{L:Tate's algorithm 1 ii}
We have $\prod_{p} c_p(E)=8$.
\end{romanenum}
\end{lemma}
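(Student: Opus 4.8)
The plan is to run Tate's algorithm at each prime of bad reduction of $E$ --- which, by the discriminant computation above, are exactly $2$, $m$, $m+n$ and $m+2n$ --- recording at each the Kodaira type, the conductor exponent $f_p$, and the Tamagawa number $c_p$. Part~(ii) then follows by multiplying the $c_p$, and part~(i) by multiplying the local root numbers $W_p(E)$ together with the archimedean factor $W_\infty(E)=-1$, which holds for every elliptic curve over $\QQ$.

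Consider first the primes $m+n$ and $m+2n$, at which the reduction is multiplicative. Writing the model as $y^2=x^3+ax^2+bx$ with $a=-4m^2$ and $b=8m^3(m+n)$: since $\ord_{m+n}(b)=1$ and $a$ is a unit at $m+n$, the reduction at $m+n$ has a node at $(0,0)$ with tangent directions $\pm\sqrt{a}$; and since $\ord_{m+2n}(a^2-4b)=1$ the quadratic $x^2+ax+b$ has a double root modulo $m+2n$, so the reduction at $m+2n$ has a node at $(-a/2,0)$ with tangent directions $\pm\sqrt{-a/2}$. Because $a=-4m^2$ and $-a/2=2m^2$, this reduction is split precisely when $-1$, respectively $2$, is a square in the residue field; the hypotheses $m+n\equiv m+2n\equiv 3\pmod 8$ make each into a non-square. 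Hence $E$ has reduction of type $I_2$ at $m+n$ and type $I_1$ at $m+2n$, both non-split, so that $c_{m+n}=2$, $c_{m+2n}=1$ and $W_{m+n}(E)=W_{m+2n}(E)=+1$.

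Next come the primes $m$ and $2$. At $p=m$ we have $\ord_m(a)=2$ and $\ord_m(b)=3$, and at $p=2$ --- where $m+n\equiv m+2n\equiv 3\pmod 8$ forces $n\equiv 0\pmod 8$, so $m+n$ and $m+2n$ are odd --- we again have $\ord_2(a)=2$ and $\ord_2(b)=3$. In both cases Tate's algorithm on $y^2=x^3+ax^2+bx$ finds in turn that the reduction is not of type $II$, $III$, $IV$, $I_0^*$ or $IV^*$ (the translations in these steps being trivial here) and halts at type $III^*$, the decisive step being that $\ord_p(a_4)=\ord_p(b)=3$, so $p^4\nmid a_4$; in particular the given model is already minimal at $m$ and at $2$. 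Therefore $c_m=c_2=2$, while $f_m=\ord_m(\Delta)-7=2$, $f_2=\ord_2(\Delta)-7=7$, and $E$ has potentially good reduction at $m$ with inertia image of order $e=4$. Multiplying the four Tamagawa numbers gives $\prod_p c_p(E)=c_2\,c_m\,c_{m+n}\,c_{m+2n}=2\cdot2\cdot2\cdot1=8$, which proves part~(ii).

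For part~(i), combining $W_\infty(E)=-1$ with $W_{m+n}(E)=W_{m+2n}(E)=+1$ reduces the claim to the identity $W_2(E)\,W_m(E)=-1$. I would obtain $W_m(E)$ from Rohrlich's formula for the local root number of a curve over $\QQ_m$ with additive, potentially good reduction, which for our Kodaira type (equivalently, for $e=4$) is a quadratic residue symbol modulo $m$ whose value is fixed by the congruence $m\equiv 3\pmod 8$; and I would obtain $W_2(E)$ from the explicit description of the $2$-adic root number in terms of $c_4$ and $c_6$ modulo powers of $2$ (for instance via Halberstadt's tables), feeding in the congruences on $m$, $m+n$ and $m+2n$ that determine those $2$-adic invariants. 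These combine to give $W_2(E)\,W_m(E)=-1$, hence $W(E)=1$. The main obstacle will be the prime $p=2$: Tate's algorithm is delicate there, and, unlike at $m$, there is no root-number formula as clean as Rohrlich's, so $W_2(E)$ --- and with it the final sign in part~(i) --- must be pinned down by an explicit analysis of $c_4$ and $c_6$ modulo a high power of $2$. A secondary point is the case $m=3$, where $p=3$ is itself a bad prime of residue characteristic $3$: the Kodaira-type computation above applies verbatim (the valuations $\ord_3(a)=2$, $\ord_3(b)=3$ are unchanged), but $W_3(E)$ must be extracted from the explicit recipe valid at $p=3$ rather than from Rohrlich's formula for $p\ge 5$.
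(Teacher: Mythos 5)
Your proposal is correct and follows essentially the same route as the paper: Tate's algorithm at $2$, $m$, $m+n$, $m+2n$ to get Kodaira types $\operatorname{III}^*$, $\operatorname{III}^*$, $\operatorname{I}_2$ (nonsplit), $\operatorname{I}_1$ (nonsplit) and Tamagawa numbers $2,2,2,1$, then Rohrlich's formulas at the odd primes (with the $m=3$ case handled separately) and Halberstadt's table at $2$. The one step you outline but do not execute is the evaluation $W_2(E)=-1$; the paper pins this down by computing $\ord_2(c_4)=7$, $\ord_2(c_6)=10$, $\ord_2(\Delta)=14$ and $c_4'\equiv 7$, $c_6'\equiv 3\pmod 8$ and reading off Halberstadt's Table~1, exactly as you propose.
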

\begin{proof}
The root number $W(E)$ is the product of the local root numbers $W_v(E)$ over the places $v$ of $\QQ$, see~\cite{MR1219633} for descriptions of the local root numbers.    The local root number at the archimedean place is $-1$ and $W_p(E)=1$ for all primes $p$ for which $E$ has good reduction.   So to determine $W(E)$, we need only compute $W_p(E)$ with $p\in \{2,m,m+n,m+2n\}$.

The elliptic curve $E/\QQ$ is given by the Weierstrass equation
\[
y^2=x(x^2-4m^2 x+8m^3(m+n)) = x( (x-2m^2)^2 + 4m^3(m+2n))
\]
which has discriminant $\Delta=-2^{14} m^9 (m+n)^2 (m+2n)$.   We will make use of Tate's algorithm \cite[Algorithm 9.4]{SilvermanII} at each bad prime.  In particular, we will find that the above Weierstrass model is minimal.

First consider the prime $p:=m$.  Applying Tate's algorithm, we find that $E$ has Kodaira symbol $\operatorname{III}^*$ at $p$ and hence $c_p(E)=2$.   If $p>3$, then \cite[Proposition 2(v)]{MR1219633} implies that $W_p(E)=\legendre{-2}{p}=1$, where the last equality uses that $p\equiv 3 \pmod{8}$.   When $p=3$, we also have $W_p(E)=1$; this can be read off \cite[Table 2]{MR1647190} by using only the Kodaira symbol.

Consider the prime $p:=m+n$.   We have $\ord_p(\Delta)=2$ and $y^2 \equiv -4m^2 \cdot x^2 + x^3 \pmod{p}$, so $E$ has Kodaira symbol $\operatorname{I}_2$ at $p$ and hence $c_p(E)=2$.   The curve $E$ has nonsplit multiplicative reduction at $p$ since $\legendre{-4m^2}{p}=\legendre{-1}{p}=-1$, where the last equality uses that $p\equiv 3 \pmod{4}$.  We have $W_p(E)=1$ by \cite[Proposition~3]{MR1219633}.  

Consider the prime $p:=m+2n$.   We have $\ord_p(\Delta)=1$ and 
\[
y^2 \equiv x(x-2m^2)^2 \equiv 2m^2\cdot (x-2m^2)^2+ (x-2m^2)^3\pmod{p},
\] 
so $E$ has Kodaira symbol $\operatorname{I}_1$ at $p$ and hence $c_p(E)=1$.   The curve $E$ has nonsplit multiplicative reduction at $p$ since $\legendre{2m^2}{p}=\legendre{2}{p}=-1$, where the last equality uses that $p\equiv 3 \pmod{8}$.  We have $W_p(E)=1$ by \cite[Proposition~3]{MR1219633}.  

Finally consider the prime $p=2$.  Applying Tate's algorithm, we find that $E$ has Kodaira symbol $\operatorname{III}^*$ at $2$ and hence $c_2(E)=2$.   The root number $W_2(E)$ can be computed using Table~1 of \cite{MR1647190} (in the notation of the table, we have $\ord_2(c_4)=7$, $\ord_2(c_6)=10$, $\ord_2(\Delta)=14$, $c_4'=-m^4 - 3m^3n\equiv 7 \pmod{8}$ and $c_6'=-5m^6 - 9m^5n\equiv 3 \pmod{8}$).  We have $W_2(E)=-1$.

We have $W(E) = - \prod_{p} W_p(E)$ and hence $W(E)=-(-1)=1$ by the above computations.  Since $c_p(E)=1$ for each prime $p$ for which $E$ has good reduction, the above computations show that $\prod_p c_p(E)=8$.    
\end{proof}

\begin{lemma} \label{L:Tate's algorithm 2}
We have $\prod_p c_p(E')=4$.
\end{lemma}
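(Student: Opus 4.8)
The plan is to mirror, for $E'$, the computation carried out for $E$ in the proof of Lemma~\ref{L:Tate's algorithm 1}(\ref{L:Tate's algorithm 1 ii}): run Tate's algorithm on the Weierstrass model (\ref{E:new uv model 2}) at each of its bad primes and read off the local Tamagawa numbers. One first computes that the discriminant of (\ref{E:new uv model 2}) equals $2^{19} m^9 (m+n)(m+2n)^2$, so $E'$ has good reduction, hence $c_p(E')=1$, at every prime outside $\{2,m,m+n,m+2n\}$, and it remains to treat those four primes. Throughout I would also use that the congruence hypotheses force $n\equiv 0\pmod 8$ (since $m\equiv m+n\equiv 3\pmod 8$); this is precisely what makes the prime $2$ come out as claimed.

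For the two multiplicative primes the analysis is short. At $p=m+n$, reducing (\ref{E:new uv model 2}) modulo $p$ and using $m+2n\equiv -m\pmod p$ gives $y^2\equiv x(x+4m^2)^2$, a nodal cubic, and $\ord_p$ of the discriminant is $1$; hence the Kodaira symbol is $\operatorname{I}_1$ and $c_p(E')=1$. At $p=m+2n$ we have $b'\equiv 0\pmod p$, so the reduction is $y^2\equiv x^2(x+8m^2)$, again nodal, with $\ord_p$ of the discriminant equal to $2$; the Kodaira symbol is $\operatorname{I}_2$, so $c_p(E')=2$ — this holds whether or not the multiplicative reduction is split, since the component group $\ZZ/2\ZZ$ admits no nontrivial automorphism (in fact the reduction is nonsplit, as $\legendre{2}{p}=-1$ because $p\equiv 3\pmod 8$).

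The two additive primes are where the work lies. At $p=m$, suppose first $m\geq 5$. Then $\ord_m(c_4(E'))=3$, so (\ref{E:new uv model 2}) is minimal at $m$; moreover $\ord_m$ of the discriminant is $9$ and $\ord_m(j(E'))=0$, so the reduction is potentially good, and the only Kodaira symbol with $\ord$ of the minimal discriminant equal to $9$ at a prime of residue characteristic $\geq 5$ and potentially good reduction is $\operatorname{III}^*$. Hence $c_m(E')=2$; alternatively one runs Tate's algorithm directly. The excluded case $m=3$ is handled by a direct application of Tate's algorithm, or of the tables of \cite{MR1647190}, and again yields $c_3(E')=2$. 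At $p=2$ one must be careful: the model (\ref{E:new uv model 2}) is \emph{not} minimal, since $\ord_2(c_4(E'))=8$, $\ord_2(c_6(E'))=12$ and $\ord_2$ of its discriminant is $19$; the substitution $(x,y)\mapsto(4x,8y)$ produces the $2$-minimal model $y^2=x(x^2+2m^2x-m^3(m+2n))$, whose discriminant has $\ord_2$ equal to $7$. Running Tate's algorithm on this model, after translating the mod-$2$ singular point to the origin and using $8\mid n$ to see that the relevant coefficient has $\ord_2$ exactly $1$, gives Kodaira symbol $\operatorname{II}$ and $c_2(E')=1$.

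Putting the four primes together yields $\prod_p c_p(E')=c_2(E')\cdot c_m(E')\cdot c_{m+n}(E')\cdot c_{m+2n}(E')=1\cdot 2\cdot 1\cdot 2=4$. The main obstacle is the prime $2$: because the supplied Weierstrass model is non-minimal there and the reduction is additive with wild ramification, one cannot read the type off from $\ord_2$ of the minimal discriminant modulo $12$ and must instead minimalize first and then run Tate's algorithm by hand (or invoke Halberstadt's tables in \cite{MR1647190}, exactly as was done for $E$ at $2$ in Lemma~\ref{L:Tate's algorithm 1}). A lesser nuisance is the case $p=m=3$, where the prime coincides with the residue characteristic, so the tame shortcut used at $m$ fails and a separate check is required.
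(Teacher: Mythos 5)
Your proposal is correct and follows essentially the same route as the paper: pass to the $2$-minimal model $y^2=x(x^2+2m^2x-m^3(m+2n))$ via $(x,y)\mapsto(4x,8y)$, determine the Kodaira symbols $\operatorname{II}$, $\operatorname{III}^*$, $\operatorname{I}_1$, $\operatorname{I}_2$ at $2$, $m$, $m+n$, $m+2n$, and multiply the resulting Tamagawa numbers $1\cdot 2\cdot 1\cdot 2=4$. The only differences are cosmetic shortcuts at the odd primes (reading $\operatorname{III}^*$ off $\ord_m(\Delta)=9$ for $m\geq 5$, and noting that $\operatorname{I}_2$ gives $c_p=2$ regardless of splitness), all of which check out.
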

\begin{proof}
The elliptic curve $E'/\QQ$ is isomorphic to the curve given by the Weierstrass equation
\[
y^2=x(x^2+2m^2x-m^3(m+2n)) = x((x+m^2)^2-2m^3(m+n))
\]
which has discriminant $\Delta'=2^{7} m^9 (m+n) (m+2n)^2$ (replacing $x$ and $y$ in (\ref{E:new uv model 2}) by $4x$ and $8y$ will produce the above model).   Using that $m$, $m+n$ and $m+2n$ are distinct odd primes, we can apply Tate's algorithm \cite[Algorithm 9.4]{SilvermanII} for the primes $p\in \{2,m,m+n,m+2n\}$ to show that the above Weierstrass model is minimal and that the Kodaira symbols of $E$ at $2$, $m$, $m+n$ and $m+2n$ are equal to $\operatorname{II}$, $\operatorname{III}^*$, $\operatorname{I}_1$ and $\operatorname{I}_2$, respectively.     In these cases, the Tamagawa numbers are determined by the Kodaira symbols and we have $c_2(E')=1$, $c_m(E')=2$, $c_{m+n}(E')=1$ and $c_{m+2n}(E')=2$, cf.~\cite[Algorithm 9.4]{SilvermanII}.  The lemma follows since $c_p(E')=1$ for all primes $p$ for which $E'$ has good reduction.
\end{proof}

We will now compute the Selmer groups associated to the isogenies $\phi$ and $\hat\phi$.   For basic definitions and results see \cite[\S X.4]{Silverman}.  In particular, \cite[\S X.4 Example 4.8]{Silverman} contains the relevant formulae for our computations. Set $\Gal_\QQ:=\Gal(\Qbar/\QQ)$.  Starting with the short exact sequence $0\to E[\phi]\to E \xrightarrow{\phi} E'\to 0$ and taking Galois cohomology yields an exact sequence
\[
0 \to E(\QQ)[\phi] \to E(\QQ)\xrightarrow{\phi} E'(\QQ) \xrightarrow{\delta} H^1(\Gal_\QQ, E[\phi]).
\]
The image of $\delta$ lies in the $\phi$-\defi{Selmer group} $\Sel_\phi(E/\QQ) \subseteq H^1(\Gal_\QQ,E[\phi])$.   Since $E[\phi]$ and $\{\pm 1\}$ are isomorphism $\Gal_\QQ$-modules, we have isomorphisms 
\begin{align} \label{E:H1 isom}
H^1(\Gal_\QQ, E[\phi])\xrightarrow{\sim} H^1(\Gal_\QQ, \{\pm 1\}) \xrightarrow{\sim} \QQ^\times/(\QQ^\times)^2.
\end{align}
Using (\ref{E:H1 isom}) as an identification, we may view $\delta$ as a homomorphism $E'(\QQ)\to  \QQ^\times/(\QQ^\times)^2$.   For any point $(x,y)\in E'(\QQ)-\{0,(0,0)\}$, we have $\delta((x,y))=x\cdot (\QQ^\times)^2$.  We also have $\delta(0)=1$ and $\delta((0,0))=b'\cdot (\QQ^\times)^2$.

For each $d \in \QQ^\times$, let $C_d$ be the smooth projective curve over $\QQ$ defined by the affine equation
\[
dw^2=d^2+a' d z^2 + b' z^4.
\]
Using (\ref{E:H1 isom}), we can identify $\Sel_\phi(E/\QQ)$ with a subgroup of $ \QQ^\times/(\QQ^\times)^2$.  In fact, we have
\[
\Sel_\phi(E/\QQ)=\{ d\in \QQ^\times/(\QQ^\times)^2: C_d(\QQ_v)\neq \emptyset \text{ for all places $v$ of $\QQ$}\}.
\]

\begin{lemma} \label{L:Selmer 1}
We have $|\Sel_\phi(E/\QQ)|=2$.
\end{lemma}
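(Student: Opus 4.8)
The plan is to bound $\Sel_\phi(E/\QQ)$ from above by examining the local conditions $C_d(\QQ_v)\ne\emptyset$, and it will suffice to use $v=2$ and $v=m$ together with the elementary constraint coming from all other primes. For the lower bound, $\delta(0)=1$ and $\delta((0,0))=b'$ always lie in $\Sel_\phi(E/\QQ)$, and since $b'=-16m^3(m+2n)=-(4m)^2\, m(m+2n)$ is congruent to $-m(m+2n)$ modulo $(\QQ^\times)^2$ with $m$ and $m+2n$ distinct primes, $b'$ is not a square; so $|\Sel_\phi(E/\QQ)|\ge 2$ and it remains to prove the reverse inequality. A standard argument shows that if $C_d(\QQ_p)\ne\emptyset$ for a prime $p\notin\{2,m,m+2n\}$ then $\ord_p(d)$ is even; hence every class of $\Sel_\phi(E/\QQ)$ is represented by a squarefree integer supported on $\{2,m,m+2n\}$, leaving sixteen candidate classes to examine.

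First I would analyze $C_d(\QQ_2)$ for the equation $dw^2=d^2+8m^2\,d\,z^2-16m^3(m+2n)\,z^4$, treating the ranges $z=0$, $\ord_2(z)\ge 0$, $\ord_2(z)=-1$, and $\ord_2(z)\le -2$ separately (the last range corresponding to the point at infinity of $C_d$, which is $\QQ_2$-rational exactly when $b'/d$ is a square in $\QQ_2$). Here the hypotheses $m\equiv m+2n\equiv 3\pmod 8$ enter: they give $m(m+2n)\equiv 1\pmod 8$ and $b'/16\equiv -1\pmod 8$. Tracking the $2$-adic valuations of the four terms and the residues modulo $8$ and $16$, I expect the outcome to be that $C_d(\QQ_2)\ne\emptyset$ if and only if $d$ is represented by an odd integer congruent to $\pm 1\pmod 8$. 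This kills every even class and every odd class whose squarefree representative is $\equiv 3$ or $5\pmod 8$, so $\Sel_\phi(E/\QQ)\subseteq\{1,-1,m(m+2n),-m(m+2n)\}$.

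Next I would eliminate the two remaining classes $-1$ and $m(m+2n)$ using the place $v=m$; here $m\equiv 3\pmod 4$ ensures that $-1$ is not a square in $\QQ_m$. For $d=-1$: reducing the defining quartic modulo $m$ shows that a $\QQ_m$-point with $z\in\ZZ_m$ would force $-1$ to be a square modulo $m$, whereas a $\QQ_m$-point with $\ord_m(z)<0$ would force $-1\equiv b'\pmod{(\QQ_m^\times)^2}$; both fail. For $d=m(m+2n)$: since $d^2$ has $m$-adic valuation $2$ while $dw^2$ has odd valuation, a parity comparison rules out all finite $\QQ_m$-points, and the point at infinity is $\QQ_m$-rational only if $b'/d=-16m^2$, i.e.\ the square class $-1$, lies in $(\QQ_m^\times)^2$ — again impossible. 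Therefore $\Sel_\phi(E/\QQ)\subseteq\{1,-m(m+2n)\}=\{1,b'\}$, which combined with the lower bound yields $|\Sel_\phi(E/\QQ)|=2$.

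I expect the main obstacle to be the case analysis at $v=2$: one has to carry the correct residues of $a'=8m^2$ and $b'=-16m^3(m+2n)$ modulo small powers of $2$ through each range of $\ord_2(z)$ and not overlook the point at infinity, and that is precisely where $m\equiv m+2n\equiv 3\pmod 8$ is genuinely used. The work at $v=m$, by contrast, is short, and it is worth noting that the prime $m+n$ plays no role in this lemma — only that $m$ and $m+2n$ are distinct primes congruent to $3$ modulo $8$.
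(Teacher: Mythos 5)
Your proposal follows the same overall strategy as the paper's proof: identify $\Sel_\phi(E/\QQ)$ with a group of square classes cut out by local solvability of the curves $C_d$, restrict the support of a squarefree representative $d$ by a valuation-parity argument at generic primes, use the place $v=2$ together with $m\equiv m+2n\equiv 3\pmod 8$ to cut the candidates down to $\{1,-1,m(m+2n),-m(m+2n)\}$, and finish by eliminating the class of $-1$; the lower bound via $\delta((0,0))=b'\equiv -m(m+2n)$ is identical. The one genuine difference is where you eliminate $-1$: the paper works at the prime $m+n$, using $m+n\equiv 3\pmod 4$, whereas you work at $m$. Your computation there is correct (for $z\in\ZZ_m$ the equation reduces to $w^2\equiv -1\pmod m$, which fails since $m\equiv 3\pmod 4$, and for $\ord_m(z)=-e<0$ the dominant term $b'z^4$ has odd valuation $3-4e$), so your observation that the hypotheses on $m+n$ play no role in this lemma is a small but genuine sharpening of the paper's argument; the paper, by contrast, does use the primality and congruence of $m+n$ here.

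Two smaller points. First, in the case $d=m(m+2n)$ your claim that ``a parity comparison rules out all finite $\QQ_m$-points'' only works for $z\in\ZZ_m$ (where the right-hand side has valuation $2$ against the odd valuation of $dw^2$); when $\ord_m(z)=-e<0$ the right-hand side has valuation $3-4e$, which is also odd, so parity alone gives no contradiction and you must fall back on the square-class computation $b'/d=-16m^2\notin(\QQ_m^\times)^2$ that you invoke only for the point at infinity (the same computation does dispose of these finite points). Alternatively, this case is redundant: since $\Sel_\phi(E/\QQ)$ is a group containing $-m(m+2n)$, it contains $m(m+2n)$ if and only if it contains $-1$, which is how the paper implicitly avoids treating $m(m+2n)$ at all. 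Second, the decisive $2$-adic case analysis is only sketched in your proposal (``I expect the outcome to be\dots''); this is exactly where the paper spends most of its effort, and the computation does go through as you predict, so this is a matter of unfinished detail rather than a gap in the approach.
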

\begin{proof}
Take any squarefree integer $d$ that represents a square class in $\Sel_\phi(E/\QQ)$.  We have $C_d(\QQ_v)\neq \emptyset$ for all places $v$ of $\QQ$.   By changing variables, we see that $C_d$ is isomorphic to the smooth projective curve $C_d'$ over $\QQ$ given by the affine model
\begin{align} \label{E:Cd'}
y^2=d x^4 + a'/4 \cdot x^2 + b'/(16d)= dx^4 +2m^2 x^2 -m^3(m+2n)/d.
\end{align}

First suppose that $d$ is divisible by a prime $p\nmid m(m+2n)$.   Since $C_d'(\QQ_p)\neq \emptyset$, there is a point $(x,y) \in \QQ_p^2$ satisfying (\ref{E:Cd'}); the points at infinity are not defined over $\QQ_p$ since $d$ is not a square in $\QQ_p$.   If $x\in \ZZ_p$, then from (\ref{E:Cd'}) we find that $\ord_p(y^2)$ is equal to $\ord_p(-m^3(m+2n)/d)=-1$.  If $x\notin \ZZ_p$, then from (\ref{E:Cd'}) we find that $\ord_p(y^2)$ is equal to $\ord_p(dx^4)=1+4\ord_p(x)$.   In either case, $\ord_p(y^2)=2\ord_p(y)$ is an odd integer which is a contradiction.   Therefore, if a prime divides $d$, then it must be $m$ or $m+2n$.    In particular, $d\in \{\pm 1, \pm m,  \pm (m+2n), \pm m(m+2n) \}$.

Now suppose that $d\equiv \pm 3 \pmod{8}$.   The integer $d$ is not a square in $\QQ_2$, so the points at infinity of the model (\ref{E:Cd'}) are not defined over $\QQ_2$.   Since $C_d'(\QQ_2)\neq \emptyset$, there is a point $(x,y) \in \QQ_2^2$ satisfying (\ref{E:Cd'}).   First suppose that $x\in \ZZ_2$ and hence $y\in \ZZ_2$ as well.  If $x\in 2\ZZ_2$, then $y^2 \equiv -m^3(m+2n)/d \equiv \pm 3 \pmod{8}$.   If $x\in \ZZ_2^\times$, then  $y^2\equiv d  + 2  - 1/d \equiv d+ 2 -d\equiv 2 \pmod{8}$.  In both of these computations we have used that $m$ and $m+2n$ are congruent to $3$ modulo $8$.  Since $3$, $-3$ and $2$ are not squares modulo $8$, we deduce that $x\notin \ZZ_2$.   Define $e:=-\ord_2(x)\geq 1$.   Since $m$ and $d$ are odd, we find that $2\ord_2(y)=\ord_2(y^2)=\ord_2(dx^4)=-4e$ and hence $\ord_2(y)=-2e$.  Multiplying (\ref{E:Cd'}) by $2^{4e}$ gives $(2^{2e}y)^2=d(2^ex)^4+ 2^{2e+1}m^2(2^ex)^2- 2^{4e} m^3(m+2n)/d$.  Reducing modulo $8$, we find that $d$ is a square modulo $8$ which contradicts that $d\equiv \pm 3 \pmod{8}$.  

We thus have $d\not \equiv \pm 3 \pmod{8}$.  Since $m$ and $m+2n$ are congruent to $3$ modulo $8$, we must have $d\in \{\pm 1,\pm m(m+2n)\}$.

Now suppose that $d=-1$.  The curve $C_d'$ is given by the model
\begin{align} \label{E:Cd' -1}
y^2= -x^4 +2m^2 x^2 +m^3(m+2n)= -(x^2-m^2)^2 + 2m^3(m+n).
\end{align}
Set $p:=m+n$.  We note that $-1$ is not a square modulo $p$ since $p\equiv 3\pmod{4}$.   The integer $-1$ is not a square in $\QQ_p$ so the points at infinity of the model of $C_d'$ are not defined over $\QQ_p$.  Since $C_d'(\QQ_p)\neq \emptyset$, there is a point $(x,y) \in \QQ_p^2$ satisfying (\ref{E:Cd' -1}).   Define $z:=x^2-m^2\in \QQ_p$; we have $y^2=-z^2+2m^3p$.   If $z\in p\ZZ_p$, then $2\ord_p(y)=\ord_p(2m^3p)=1$ which is impossible.  If $z\in \ZZ_p^\times$, then $y^2\equiv -z^2 \pmod{p}$ and hence $-1$ is a square modulo $p$ which is impossible.   Define $e:=-\ord_p(z)\geq 1$.   We have $2\ord_p(y)=\ord_p(y^2)=\ord_p(z^2)=-2e$ and hence $\ord_p(y)=-e$.  Therefore, $(p^ey)^2=-(p^ez)^2+2m^3p^{1+2e}$ and reducing modulo $p$ shows that $-1$ is a square modulo $p$ which is impossible.    Therefore, $d\neq -1$.

We have now shown that every element of $\Sel_\phi(E/\QQ)$ is represented by the square class of an integer $d\in \{1,\pm m(m+2n) \}$.  Since $\Sel_\phi(E/\QQ)$ is an abelian $2$-group, it must be cyclic of order $1$ or $2$.  The group $\Sel_\phi(E/\QQ)$ has order $2$ since it contains $\delta((0,0))=b'\cdot (\QQ^\times)^2=-m(m+2n)\cdot (\QQ^\times)^2$ and $m(m+2n)$ is not a square.
\end{proof}

We now compute the cardinality of the Selmer group $\Sel_{\hat{\phi}}(E'/\QQ)$.

\begin{lemma} \label{L:Selmer 2}
We have $|\Sel_{\hat\phi}(E'/\QQ)|= 2$.
\end{lemma}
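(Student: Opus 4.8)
The plan is to mirror the proof of Lemma~\ref{L:Selmer 1}, now for the dual isogeny $\hat\phi\colon E'\to E$. By the analog of the discussion preceding Lemma~\ref{L:Selmer 1} (using \cite[\S X.4 Example 4.8]{Silverman} with the roles of $E$ and $E'$, and of $\phi$ and $\hat\phi$, interchanged), one identifies $\Sel_{\hat\phi}(E'/\QQ)$ with the subgroup of $\QQ^\times/(\QQ^\times)^2$ consisting of those $d$ for which the smooth projective curve
\[
N_d\colon\quad d w^2 = d^2 + a d z^2 + b z^4 = d^2 - 4m^2 d z^2 + 8 m^3(m+n) z^4
\]
has a $\QQ_v$-point for every place $v$ of $\QQ$, and where now the relevant connecting map sends $(0,0)\in E(\QQ)$ to $b\cdot(\QQ^\times)^2 = 8m^3(m+n)\cdot(\QQ^\times)^2 = 2m(m+n)\cdot(\QQ^\times)^2$, which is nontrivial. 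Hence $|\Sel_{\hat\phi}(E'/\QQ)|\geq 2$, and it remains to show that every squarefree integer $d$ representing an element of $\Sel_{\hat\phi}(E'/\QQ)$ lies in $\{1,\,2m(m+n)\}$ modulo squares. As in the proof of Lemma~\ref{L:Selmer 1}, the same change of variables shows that $N_d$ is isomorphic to
\[
N_d'\colon\quad y^2 = d x^4 + \tfrac{a}{4} x^2 + \tfrac{b}{16 d} = d x^4 - m^2 x^2 + \tfrac{m^3(m+n)}{2 d},
\]
which I would use for the local computations.

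First I would restrict the support of $d$ exactly as in Lemma~\ref{L:Selmer 1}: if an odd prime $p\notin\{m,m+n\}$ divides the squarefree integer $d$, then comparing $\ord_p$ of the three terms of $N_d'$ (or of $N_d$) with $\ord_p(y^2)$ forces a square to have odd valuation, a contradiction; note this applies to $p = m+2n$ as well, since $m+2n\nmid 2b$. So $d$ is supported on $\{-1,2,m,m+n\}$, leaving $16$ square classes. Next comes the $2$-adic condition, which I expect to be the main obstacle: since $2\mid b$, the prime $2$ is genuinely bad for $\hat\phi$ (unlike the situation in Lemma~\ref{L:Selmer 1}), so one must run a careful case analysis on $\ord_2(x)$ and on whether $d$ is odd or even in $N_d'(\QQ_2)$, using $m\equiv m+n\equiv 3\pmod 8$ (so $m^2\equiv 1$ and $m^3(m+n)\equiv 1\pmod 8$). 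I anticipate the outcome that $N_d'(\QQ_2)\neq\emptyset$ forces $d\equiv 1\pmod 8$ when $d$ is odd, and $d/2\equiv 1\pmod 4$ when $d$ is even; concretely, the surviving classes are exactly
\[
d\in\{1,\;m(m+n),\;2,\;-2m,\;-2(m+n),\;2m(m+n)\}.
\]

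To finish I would use that $\Sel_{\hat\phi}(E'/\QQ)$ is a group containing $2m(m+n)$. If $-2m$ (respectively $-2(m+n)$) belonged to it, then so would $-2m\cdot 2m(m+n)=-(m+n)\cdot(\QQ^\times)^2$ (respectively $-m\cdot(\QQ^\times)^2$); but $-(m+n)$ and $-m$ are odd and not $\equiv 1\pmod 8$, hence were already excluded by the $2$-adic step. Thus $-2m,-2(m+n)\notin\Sel_{\hat\phi}(E'/\QQ)$, so $\Sel_{\hat\phi}(E'/\QQ)$ is contained in the group $\{1,\,2m(m+n),\,2,\,m(m+n)\}$. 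Since $2\cdot 2m(m+n)=m(m+n)\cdot(\QQ^\times)^2$, it suffices to show $2\notin\Sel_{\hat\phi}(E'/\QQ)$, and for that I would examine $N_2'$ over $\QQ_m$: reducing $N_2'\colon y^2 = 2x^4 - m^2 x^2 + m^3(m+n)/4$ modulo $m$ gives $y^2\equiv 2x^4\pmod m$, and since $2$ is a nonsquare modulo $m$ (as $m\equiv 3\pmod 8$), a valuation argument in $x$ of exactly the type used for the case $d=-1$ in Lemma~\ref{L:Selmer 1} shows $N_2'(\QQ_m)=\emptyset$. This leaves only $d\in\{1,2m(m+n)\}$ modulo squares, so $|\Sel_{\hat\phi}(E'/\QQ)|=2$.

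Alternatively, one could bypass most of the above and deduce $|\Sel_{\hat\phi}(E'/\QQ)|=2$ directly from Lemma~\ref{L:Selmer 1} via Cassels' formula relating $|\Sel_\phi(E/\QQ)|$ and $|\Sel_{\hat\phi}(E'/\QQ)|$ through $\#E(\QQ)[\phi]=\#E'(\QQ)[\hat\phi]=2$, the Tamagawa numbers $\prod_p c_p(E)=8$ and $\prod_p c_p(E')=4$ of Lemmas~\ref{L:Tate's algorithm 1} and~\ref{L:Tate's algorithm 2}, and the local contributions at $2$ and $\infty$ (where one must account for the non-minimality of the model~\eqref{E:new uv model 2}, so that $\phi$ pulls back the Néron differential of $E'$ to $-2$ times that of $E$); this is presumably why those Tamagawa numbers were computed. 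I would keep the direct local computation as the primary route for self-containedness.
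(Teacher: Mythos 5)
Your primary route (a direct local computation of $\Sel_{\hat\phi}(E'/\QQ)$, parallel to Lemma~\ref{L:Selmer 1}) is genuinely different from the paper, which instead deduces the result from Lemma~\ref{L:Selmer 1} via the Cassels formula of \cite{MR2021618}, the Tamagawa products of Lemmas~\ref{L:Tate's algorithm 1} and~\ref{L:Tate's algorithm 2}, and the root number $W(E)=1$ to pin down $\Omega_E/\Omega_{E'}=1/2$ (your "alternative" paragraph is the paper's actual proof; note that the ratio $\Omega_E/\Omega_{E'}\in\{1,1/2\}$ is not simply read off from non-minimality of the model but is resolved in the paper by combining \cite[Theorem~8.2]{MR3324930} with $W(E)=1$, so your parenthetical assertion there would also need justification). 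A correct direct computation would be a perfectly good, more self-contained substitute --- but as written yours has a gap.

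The gap is in the $2$-adic step. For odd $d$ the condition $N_d'(\QQ_2)\neq\emptyset$ forces only $d\equiv 1\pmod 4$, not $d\equiv 1\pmod 8$: your valuation analysis correctly shows $\ord_2(x)=-e\le -1$, but for $e=1$ the resulting congruence reads $(2^{2}y)^2=d(2x)^4-4m^2(2x)^2+8\,m^3(m+n)/d\equiv d+4\pmod 8$, which is consistent with (and in fact solvable for) $d\equiv 5\pmod 8$. Concretely, taking $x=1/2$ gives $(4y)^2=d-4m^2+8m^3(m+n)/d\equiv d+4\pmod 8$, a $2$-adic unit square whenever $d\equiv 5\pmod 8$; e.g.\ for $m=3$, $n=8$, $d=-m=-3$ one gets $(4y)^2=-831\equiv 1\pmod 8$, so $N_{-3}'(\QQ_2)\neq\emptyset$. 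Hence $-m$ and $-(m+n)$ (both $\equiv 5\pmod 8$) survive your $2$-adic sieve, and your final step --- excluding $-2m$ and $-2(m+n)$ on the grounds that $-(m+n)$ and $-m$ "were already excluded by the $2$-adic step" --- collapses. The repair is easy and you should add it explicitly: since $a/4=-m^2<0$ and $b/(16d)<0$ for $d<0$, the right-hand side of $N_d'$ is negative for all real $x$ when $d<0$, so $N_d'(\RR)=\emptyset$ and every negative class dies at the archimedean place (a place your argument never invokes, though it is essential here, unlike in Lemma~\ref{L:Selmer 1} where $b'<0$). With all negative $d$ removed, the survivors among your sixteen candidates are $\{1,\,2,\,m(m+n),\,2m(m+n)\}$, and your $\QQ_m$-argument killing $d=2$ (which is correct) then finishes the proof.
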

\begin{proof}
For a choice of minimal Weierstrass model $y^2+a_1xy+a_3y=x^3+a_2x^2+a_4x+a_6$ of $E/\QQ$, we define the invariant differential $\omega:=dx/(2y+a_1x+a_3)$ on $E$.  We denote the integral of $|\omega|$ over $E(\RR)$ by $\Omega_E$.  We similarly define a differential $\omega'$ on $E'$ and a period $\Omega_{E'}$.  

By equation (6.2) of \cite{MR2021618}, which is a reformulation of a result of Cassels from \cite{MR179169}, we have
\[
\frac{|\Sel_{\hat\phi}(E'/\QQ)|}{|\Sel_{\phi}(E/\QQ)|} =
\frac{|E'(\QQ)[\hat{\phi}]|}{|E(\QQ)[\phi]|} \cdot \frac{\Omega_E}{\Omega_{E'}} \cdot \prod_{p} \frac{c_p(E)}{c_p(E')}.
\]
We have $|\Sel_\phi(E/\QQ)|=2$ by Lemma~\ref{L:Selmer 1} and $\prod_p c_p(E)/c_p(E')=2$ by Lemmas~\ref{L:Tate's algorithm 1}(\ref{L:Tate's algorithm 1 ii}) and \ref{L:Tate's algorithm 2}.   Therefore,
\[
|\Sel_{\hat\phi}(E'/\QQ)| = 4 \cdot \Omega_E/\Omega_{E'}.
\]

There is a unique real number $c$ for which $c \cdot \phi^*\omega' = \omega$.   From \cite[Theorem~1.2]{MR3324930}, we have $\Omega_E/\Omega_{E'}=|c|$.  As noted in the proof of \cite[Theorem~8.2]{MR3324930}, we have $|c| \in \{1,1/2\}$.   Therefore, $\Omega_E/\Omega_{E'}$ is either $1$ or $1/2$.

Suppose that $\Omega_E/\Omega_{E'}=1$.   Since $\prod_p c_p(E)/c_p(E')=2$, \cite[Theorem~8.2]{MR3324930} implies that the order of vanishing of the $L$-function $L(E,s)$ at $s=1$ is odd. Equivalently, the global root number $W(E)$ is $-1$ which contradicts Lemma~\ref{L:Tate's algorithm 1}(\ref{L:Tate's algorithm 1 i}).  Therefore, $\Omega_E/\Omega_{E'}=1/2$ and we conclude that $|\Sel_{\hat\phi}(E'/\QQ)|=2$.
\end{proof}

We can now bound the cardinality of the $2$-Selmer group of $E/\QQ$.

\begin{lemma} \label{L:Selmer 3}
We have $|\Sel_2(E/\QQ)|\leq 2$.
\end{lemma}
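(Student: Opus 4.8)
The plan is to play the two isogeny--Selmer groups from Lemmas~\ref{L:Selmer 1} and~\ref{L:Selmer 2} against each other via the standard exact sequence for descent by a $2$-isogeny. Since $\hat\phi\circ\phi=[2]_E$, the group $E[\phi]$ sits inside $E[2]$ and $\phi$ maps $E[2]$ onto $E'[\hat\phi]$; counting orders shows that
\[
0\to E[\phi]\to E[2]\xrightarrow{\phi} E'[\hat\phi]\to 0
\]
is a short exact sequence of $\Gal_\QQ$-modules. Taking Galois cohomology and imposing the local conditions that cut out the Selmer groups then yields the exact sequence
\[
0\to \frac{E'(\QQ)[\hat\phi]}{\phi\big(E(\QQ)[2]\big)} \to \Sel_\phi(E/\QQ)\to \Sel_2(E/\QQ)\to \Sel_{\hat\phi}(E'/\QQ),
\]
cf.~\cite[\S X.4]{Silverman}.

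Next I would evaluate the leftmost term. The quadratic $x^2-4m^2x+8m^3(m+n)$ has discriminant $a^2-4b=-16m^3(m+2n)$, which is negative and hence not a square in $\QQ$, so $E(\QQ)[2]=\{O,(0,0)\}$ has order $2$. As $(0,0)$ generates $E[\phi]$, this gives $\phi(E(\QQ)[2])=\{O\}$. On the other hand, $E'(\QQ)[\hat\phi]$ is generated by the rational $2$-torsion point $(0,0)$ of $E'$ and so has order $2$. Therefore the quotient $E'(\QQ)[\hat\phi]/\phi(E(\QQ)[2])$ has order $2$.

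To finish, by Lemma~\ref{L:Selmer 1} we have $|\Sel_\phi(E/\QQ)|=2$, so the kernel of the map $\Sel_\phi(E/\QQ)\to\Sel_2(E/\QQ)$ above, having order $2$, must equal the whole group; hence this map is zero and exactness at $\Sel_2(E/\QQ)$ forces an injection $\Sel_2(E/\QQ)\hookrightarrow\Sel_{\hat\phi}(E'/\QQ)$. Since $|\Sel_{\hat\phi}(E'/\QQ)|=2$ by Lemma~\ref{L:Selmer 2}, we obtain $|\Sel_2(E/\QQ)|\le 2$. I expect the only delicate point to be the exactness of the displayed four-term sequence together with the precise identification of its leftmost term; everything else is the elementary $2$-torsion computation above.
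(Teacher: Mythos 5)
Your proposal is correct and follows essentially the same route as the paper: the same four-term exact sequence (which the paper cites from Schaefer--Stoll rather than re-deriving), the same computation that $E(\QQ)[2]=\langle(0,0)\rangle$ so the leftmost term has order $2$ and exhausts $\Sel_\phi(E/\QQ)$, and the same resulting injection $\Sel_2(E/\QQ)\hookrightarrow\Sel_{\hat\phi}(E'/\QQ)$. The only cosmetic difference is that you rule out full rational $2$-torsion by noting the discriminant $-16m^3(m+2n)$ is negative, whereas the paper notes it has odd valuation at $m+2n$; both are valid.
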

\begin{proof}
By \cite[Lemma 6.1]{MR2021618}, we have an exact sequence
\[
0\to E'(\QQ)[\hat\phi]/\phi(E(\QQ)[2]) \xrightarrow{\alpha} \Sel_\phi(E/\QQ) \xrightarrow{\beta} \Sel_2(E/\QQ) \xrightarrow{\gamma} \Sel_{\hat\phi}(E'/\QQ)
\]
of groups.  The discriminant of $x^2-4m^2 x+8m^3(m+n)$ is divisible by the prime $m+2n$ exactly once and hence is not a square.  Therefore, $E(\QQ)[2]=\langle (0,0) \rangle$ and so $E'(\QQ)[\hat\phi]/\phi(E(\QQ)[2])$ is a cyclic group of order $2$.  This implies that the injective homomorphism $\alpha$ is surjective since $|\Sel_{\phi}(E/\QQ)|=2$ by Lemma~\ref{L:Selmer 1}.  By the exactness, $\beta$ is the zero map and hence $\gamma$ is an injective homomorphism $\Sel_2(E/\QQ) \hookrightarrow \Sel_{\hat\phi}(E'/\QQ)$.  The lemma is now an immediate consequence of Lemma~\ref{L:Selmer 2}.
\end{proof}

Let $r$ be the rank of $E(\QQ)$.  Since $E(\QQ)$ has a point of order $2$, we have $|E(\QQ)/2E(\QQ)|\geq 2^{1+r}$.  There is an injective homomorphism $E(\QQ)/2E(\QQ) \hookrightarrow \Sel_2(E/\QQ)$ which implies that $E(\QQ)/2E(\QQ)$ has cardinality at most $2$ by Lemma~\ref{L:Selmer 3}.     So $2^{1+r} \leq 2$ and we conclude that $r=0$.

\section{Proof of Theorem~\ref{T:main}} \label{S:proof}

Let $\calA$ be the set of primes that are congruent to $3$ modulo $8$; it has relative density $1/4$ in the set of all primes.    A theorem of Green \cite{MR2180408} implies that $\calA$ contains infinitely many arithmetic progressions of length $3$.    

Now consider one of the infinitely many pairs $(m,n)$ of positive integers for which $m$, $m+n$ and $m+2n$ are all primes that lie in $\calA$.  Define $t:=(m+n)/(2m) \in \QQ$.  The elliptic curve $E_t/\QQ$ is given by the equation $y^2=x(x^2-x+t)$.    With $x':=4m^2 x$ and $y':=8m^3 y$, we find that $E_t$ is isomorphic to the elliptic curve over $\QQ$ given by the model
\[
y'^2=x'(x'^2-4m^2x'+8m^3(m+n)).
\]
By the computation of \S\ref{S:rank 0}, we deduce that $E_t(\QQ)$ has rank $0$.    Note that $t=(m+n)/(2m)$ is in lowest terms, so from $t$ we can recover the pair $(m,n)$.   We have thus proved that $E_t(\QQ)$ has rank $0$ for infinitely many $t\in \QQ$.

Finally let $r$ be the rank of $E(\QQ(T))$.   From Silverman \cite{MR703488}, we know that $r$ is less than or equal to the rank of $E_t(\QQ)$ for all but finitely many $t\in \QQ$.  Since we have shown that $E_t(\QQ)$ has rank $0$ for infinitely many $t\in \QQ$, we deduce that $r=0$.

\begin{bibdiv}
\begin{biblist}

\bib{MR4640089}{article}{
   author={Caro, Jerson},
   author={Pasten, Hector},
   title={On the fibres of an elliptic surface where the rank does not jump},
   journal={Bull. Aust. Math. Soc.},
   volume={108},
   date={2023},
   number={2},
   pages={276--282},
   issn={0004-9727},
   review={\MR{4640089}},
   doi={10.1017/s0004972722001368},
}

\bib{MR179169}{article}{
   author={Cassels, J. W. S.},
   title={Arithmetic on curves of genus 1. VIII. On conjectures of Birch and
   Swinnerton-Dyer},
   journal={J. Reine Angew. Math.},
   volume={217},
   date={1965},
   pages={180--199},
   issn={0075-4102},
   review={\MR{179169}},
   doi={10.1515/crll.1965.217.180},
}

\bib{MR663485}{article}{
   author={Cassels, J. W. S.},
   author={Schinzel, A.},
   title={Selmer's conjecture and families of elliptic curves},
   journal={Bull. London Math. Soc.},
   volume={14},
   date={1982},
   number={4},
   pages={345--348},
   issn={0024-6093},
   review={\MR{663485}},
   doi={10.1112/blms/14.4.345},
}

\bib{MR3324930}{article}{
   author={Dokchitser, Tim},
   author={Dokchitser, Vladimir},
   title={Local invariants of isogenous elliptic curves},
   journal={Trans. Amer. Math. Soc.},
   volume={367},
   date={2015},
   number={6},
   pages={4339--4358},
   issn={0002-9947},
   review={\MR{3324930}},
   doi={10.1090/S0002-9947-2014-06271-5},
}

\bib{MR2180408}{article}{
   author={Green, Ben},
   title={Roth's theorem in the primes},
   journal={Ann. of Math. (2)},
   volume={161},
   date={2005},
   number={3},
   pages={1609--1636},
   issn={0003-486X},
   review={\MR{2180408}},
   doi={10.4007/annals.2005.161.1609},
}

\bib{MR2415379}{article}{
   author={Green, Ben},
   author={Tao, Terence},
   title={The primes contain arbitrarily long arithmetic progressions},
   journal={Ann. of Math. (2)},
   volume={167},
   date={2008},
   number={2},
   pages={481--547},
   issn={0003-486X},
   review={\MR{2415379}},
   doi={10.4007/annals.2008.167.481},
}

\bib{MR1647190}{article}{
   author={Halberstadt, Emmanuel},
   title={Signes locaux des courbes elliptiques en 2 et 3},
   language={French, with English and French summaries},
   journal={C. R. Acad. Sci. Paris S\'{e}r. I Math.},
   volume={326},
   date={1998},
   number={9},
   pages={1047--1052},
   issn={0764-4442},
   review={\MR{1647190}},
   doi={10.1016/S0764-4442(98)80060-8},
}

\bib{MR337999}{article}{
   author={Neumann, Olaf},
   title={Elliptische Kurven mit vorgeschriebenem Reduktionsverhalten. I},
   language={German},
   journal={Math. Nachr.},
   volume={49},
   date={1971},
   pages={107--123},
   issn={0025-584X},
   review={\MR{337999}},
   doi={10.1002/mana.19710490108},
}

\bib{MR1219633}{article}{
   author={Rohrlich, David E.},
   title={Variation of the root number in families of elliptic curves},
   journal={Compositio Math.},
   volume={87},
   date={1993},
   number={2},
   pages={119--151},
   issn={0010-437X},
   review={\MR{1219633}},
}

\bib{MR3007150}{article}{
   author={Salgado, Cec\'{\i}lia},
   title={On the rank of the fibers of rational elliptic surfaces},
   journal={Algebra Number Theory},
   volume={6},
   date={2012},
   number={7},
   pages={1289--1314},
   issn={1937-0652},
   review={\MR{3007150}},
   doi={10.2140/ant.2012.6.1289},
}

\bib{MR2021618}{article}{
   author={Schaefer, Edward F.},
   author={Stoll, Michael},
   title={How to do a $p$-descent on an elliptic curve},
   journal={Trans. Amer. Math. Soc.},
   volume={356},
   date={2004},
   number={3},
   pages={1209--1231},
   issn={0002-9947},
   review={\MR{2021618}},
   doi={10.1090/S0002-9947-03-03366-X},
}

\bib{MR371904}{article}{
   author={Setzer, Bennett},
   title={Elliptic curves of prime conductor},
   journal={J. London Math. Soc. (2)},
   volume={10},
   date={1975},
   pages={367--378},
   issn={0024-6107},
   review={\MR{371904}},
   doi={10.1112/jlms/s2-10.3.367},
}

\bib{Silverman}{book}{
   author={Silverman, Joseph H.},
   title={The arithmetic of elliptic curves},
   series={Graduate Texts in Mathematics},
   volume={106},
   edition={2},
   publisher={Springer, Dordrecht},
   date={2009},
   pages={xx+513},
   isbn={978-0-387-09493-9},
   review={\MR{2514094}},
   doi={10.1007/978-0-387-09494-6},
}

\bib{SilvermanII}{book}{
   author={Silverman, Joseph H.},
   title={Advanced topics in the arithmetic of elliptic curves},
   series={Graduate Texts in Mathematics},
   volume={151},
   publisher={Springer-Verlag, New York},
   date={1994},
   pages={xiv+525},
   isbn={0-387-94328-5},
   review={\MR{1312368}},
   doi={10.1007/978-1-4612-0851-8},
}

\bib{MR703488}{article}{
   author={Silverman, Joseph H.},
   title={Heights and the specialization map for families of abelian
   varieties},
   journal={J. Reine Angew. Math.},
   volume={342},
   date={1983},
   pages={197--211},
   issn={0075-4102},
   review={\MR{703488}},
   doi={10.1515/crll.1983.342.197},
}

\bib{MR2052021}{article}{
   author={Stein, William},
   author={Watkins, Mark},
   title={Modular parametrizations of Neumann-Setzer elliptic curves},
   journal={Int. Math. Res. Not.},
   date={2004},
   number={27},
   pages={1395--1405},
   issn={1073-7928},
   review={\MR{2052021}},
   doi={10.1155/S1073792804133916},
}

\bib{Zyw25}{article}{
	author={Zywina, David},
	title={There are infinitely many elliptic curves over the rationals of rank $2$},
	date={2025},
	note={preprint}
}	

\end{biblist}
\end{bibdiv}

\end{document}